\newtheorem{theorem}{Theorem}[section]
\newtheorem{corollary}[theorem]{Corollary}
\newtheorem{lemma}[theorem]{Lemma}
\newtheorem{proposition}[theorem]{Proposition}
\theoremstyle{remark}
\numberwithin{equation}{section}
\newcommand{\pfrak}{\mathfrak{p}}
\newcommand{\Pro}{\mathbb{P}}
\newcommand{\Z}{\mathbb{Z}}
\newcommand{\C}{\mathbb{C}}
\newcommand{\Q}{\mathbb{Q}}
\newcommand{\A}{\mathbb{A}}
\DeclareMathOperator{\rad}{rad}
\newcommand{\rk}{\mathrm{rank}\,}
\newcommand{\PP}{\mathbb{P}}
\DeclareMathOperator{\PSL}{PSL}
\DeclareMathOperator{\numnorm}{\mathbf{N}}
\newcommand\conjmax[1]%
\renewcommand{\PrintDOI}[1]{%
	\textsc{doi:} \href{https://doi.org/#1}{\ttfamily #1}%
}
  \DeclareFontFamily{U}{wncy}{}
    \DeclareFontShape{U}{wncy}{m}{n}{<->wncyr10}{}
    \DeclareSymbolFont{mcy}{U}{wncy}{m}{n}
    \DeclareMathSymbol{\Sha}{\mathord}{mcy}{"58}
\begin{document}
\title[]{On the greatest prime factor of polynomial values and subexponential Szpiro in families}

\author{Jos\'e Cuevas Barrientos}
\address{ Departamento de Matem\'aticas,
Pontificia Universidad Cat\'olica de Chile.
Facultad de Matem\'aticas,
4860 Av.\ Vicu\~na Mackenna,
Macul, RM, Chile}
\email[J. Cuevas Barrientos]{josecuevasbtos@uc.cl}%

\author{Hector Pasten}
\address{ Departamento de Matem\'aticas,
Pontificia Universidad Cat\'olica de Chile.
Facultad de Matem\'aticas,
4860 Av.\ Vicu\~na Mackenna,
Macul, RM, Chile}
\email[H. Pasten]{hector.pasten@uc.cl}%

\thanks{H.P. was supported by ANID Fondecyt Regular grant 1230507 from Chile.}
\date{\today}
\subjclass[2020]{Primary: 11N32; Secondary: 11J86, 11G05, 11G18.} %

\keywords{Greatest prime factor, polynomial values, linear forms in logarithms, Shimura curves.}%

\begin{abstract}
	Combining a modular approach to the $abc$ conjecture developed by the second author with the classical method of
	linear forms in logarithms, we obtain improved unconditional bounds for two classical problems. First, for
	Szpiro's conjecture when the relevant elliptic curves are members of a one-parameter family (an elliptic
	surface). And secondly, for the problem of giving lower bounds for the greatest prime factor of polynomial
	values, in the case of quadratic and cubic polynomials. The latter extends earlier work by the second author for the polynomial $n^2+1$. 

\end{abstract}

\maketitle



\section{Introduction}


\subsection{Main results}
A common theme in the study of the multiplicative properties of polynomial values is the problem of giving lower bounds
for the radical and the prime divisors of such values. We focus on two concrete problems in this context: Szpiro's
conjecture (which concerns lower bound for the radical of the discriminant of elliptic curves) and giving lower bounds
for the greatest prime divisor of polynomial values.

Regarding Szpiro's conjecture, the strongest available bound comes from the theory of modular forms.
On the other hand, for the problem of the greatest prime factor of polynomials, the classical approach is based on the
theory of linear forms in logarithms, see sections \ref{sec:history_Szpiro} and  \ref{sec:history_P}.
Building on ideas of \cite{pasten2024n2+1} that concern the greatest prime factor of $n^2+1$ as well as some partial
progress on the $abc$ conjecture, we combine these two techniques in order to get stronger bounds in both problems. In
order to state our main results, let us introduce some notation.

For an elliptic curve $E$ over $\Q$ we let $h(E)$ be the Faltings height of $E$.
For a non-zero integer $m$, let $P(m)$ be the greatest prime factor of $m$. The symbol $\log_k^*(t)$ denotes the $k$-th
iterate of the logarithm whenever it is defined and takes a value $\ge 1$, otherwise we define it as $1$. 

Regarding Szpiro's conjecture, we prove:

\begin{theorem}\label{ThmMainSzpiro}
	Let $A,B\in \Z[t]$ be polynomials, coprime over $\Q$, not both constant, and such that the discriminant $D=-16(4A^3+27B^2)\in
	\Z[t]$ is not the zero polynomial.
	Consider the elliptic surface (on the parameter $t$) of affine Weierstrass equation 
	\begin{equation}
		E_t\colon\qquad y^2 = x^3 + A(t) x+ B(t).
		\label{EqnEllSurf}
	\end{equation}
	Let $\Sigma\subseteq \Z$ be the finite set of integers $n$ such that the fibre $E_n$ is not an elliptic curve.
	There is a constant $\kappa>0$ depending only on $A(t)$ and $B(t)$, such that for every
	$n\in\Z \setminus \Sigma$ one has
	\[
		\log |\Delta_n| \ll h (E_n)\le \exp\mathopen{}\left( \kappa \sqrt{(\log N_n)\log^*_2N_n} \right)\mathclose{},
	\]
	where $N_n$ is the conductor and $\Delta_n$ is the minimal discriminant of $E_n$.
\end{theorem}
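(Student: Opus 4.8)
The plan is to turn the assertion into an effective lower bound for the conductor $N_n$ in terms of $|n|$, and then to establish that bound by combining two classical tools in complementary ranges: effective lower bounds for linear forms in logarithms, and the second author's estimates for Faltings heights via Shimura curve parametrizations. First I would reduce to a cleaner target. Since $A$ and $B$ are coprime and not both constant, the surface is non-isotrivial: $j(E_t)=1728\cdot 4A^3/(4A^3+27B^2)$ is a non-constant element of $\Q(t)$, as otherwise $A^3$ and $B^2$ would be proportional, forcing both $A$ and $B$ constant. The same coprimality implies, via Tate's algorithm, that $y^2=x^3+A(n)x+B(n)$ fails to be minimal at a prime $p\ge 5$ only when $p\mid\gcd(A(n),B(n))\mid\mathrm{Res}(A,B)$, a fixed nonzero integer, and that $E_n$ has additive reduction only at a fixed finite set of primes. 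Hence for $n\notin\Sigma$ with $|n|$ large: $D$ is non-constant; $\Delta_{\mathrm{min}}(E_n)=D(n)/u_n^{12}$ with $u_n$ bounded only in terms of $A,B$; $N_n$ equals $\rad D(n)$ up to a bounded factor; and the standard comparison of $h_{\rm Fa}(E_n)$ with $\log|\Delta_{\mathrm{min}}(E_n)|$ and $\log^+|j(E_n)|$, together with $|D(n)|\asymp |n|^{\deg D}$ and $\deg D\ge 1$, gives $h_{\rm Fa}(E_n)\asymp\log|n|$. So it suffices to prove $\log\log|n|\ll\sqrt{(\log^* N_n)(\log_2^* N_n)}$.

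\textbf{Linear forms in logarithms.} Fix two distinct roots $\theta_1,\theta_2\in\overline{\Q}$ of an irreducible factor of $D$ (these exist: $D$ is non-constant and, by coprimality, is not of the form $c(t-\theta)^{\deg D}$), and put $K=\Q(\theta_1,\theta_2)$. By the reduction step, the prime divisors of $D(n)$ coincide up to a fixed finite set with the primes of bad reduction of $E_n$; thus, taking $S$ to be the places of $K$ above those primes, the algebraic numbers $n-\theta_1$ and $n-\theta_2$ are $S$-units up to bounded denominators, with $|S|\ll\omega(N_n)$, and they satisfy $(n-\theta_1)-(n-\theta_2)=\theta_2-\theta_1\ne 0$. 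Dividing by $n-\theta_1$ gives a linear form $\Lambda$ in $O(\omega(N_n))$ logarithms — of the finite primes in $S$, of a bounded unit, and of a root of unity — with integer coefficients of size $O(\log|n|)$ and with $|\Lambda|\ll 1/|n|$. The estimates of Matveev and of Yu (the $\mathfrak{p}$-adic case) bound $\log|\Lambda|$ from below by $-C^{\,|S|}\big(\prod_{\mathfrak{p}\in S}\log\mathbb N\mathfrak{p}\big)\log(2+\log|n|)$ up to polynomial factors in $|S|$, and $\prod_{\mathfrak{p}\in S}\log\mathbb N\mathfrak{p}\le(\log P(N_n))^{|S|}$ where $P(N_n)$ is the largest prime dividing $N_n$. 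Comparing with $|\Lambda|\ll 1/|n|$ and taking logarithms twice yields $\log\log|n|\ll\omega(N_n)\cdot\log_2^* N_n$, which settles the theorem when $\omega(N_n)$ is small.

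\textbf{Shimura curves.} All but $O_{A,B}(1)$ of the $\omega(N_n)$ primes dividing $N_n$ are primes of multiplicative reduction of $E_n$. Let $p_0$ be the smallest of them and let $D_0$ be the product of an odd number of these primes chosen as large as possible (all of them if their count is odd, all but $p_0$ otherwise); then $N_n=D_0 M$ with $\log M\ll \log N_n/\omega(N_n)$, since $p_0^{\,\omega(N_n)-O(1)}\le N_n$. Because $\omega(D_0)+1$ is even, the quaternion $\Q$-algebra ramified exactly at the primes of $D_0$ and at $\infty$ is a division algebra, and Jacquet--Langlands together with modularity transfers the newform of $E_n$ to a quaternionic form of level $M$, so $E_n$ is parametrized by the Shimura curve $X_0^{D_0}(M)$. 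Feeding this into the second author's bound for the Faltings height in terms of the residual level of such a parametrization — of the shape $h_{\rm Fa}(E_n)\ll M^{O(1)}(\log N_n)^{O(1)}$ — and using $h_{\rm Fa}(E_n)\asymp\log|n|$ again gives $\log\log|n|\ll (\log^* N_n)/\omega(N_n)+\log_2^* N_n$.

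\textbf{Combination and main obstacle.} The two bounds read $\log\log|n|\ll\omega(N_n)\,u$ and $\log\log|n|\ll v/\omega(N_n)+u$ with $u=\log_2^* N_n\le v=\log^* N_n$, and the elementary inequality $\min(\omega u,\,v/\omega)\le\sqrt{uv}$ gives $\log\log|n|\ll\sqrt{uv}$, hence $h_{\rm Fa}(E_n)\le\exp\!\big(\kappa\sqrt{(\log^* N_n)(\log_2^* N_n)}\big)$. The optimal cut occurs at $\omega(N_n)\asymp\sqrt{\log^* N_n/\log_2^* N_n}$, which is precisely what produces the subexponential exponent: linear forms in logarithms dominate for few prime factors, where one cannot peel off enough primes to shrink the Shimura level, and the modular method dominates for many prime factors, where linear forms degrade. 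I expect the principal difficulty to be the Shimura curve input: one needs the second author's Arakelov-theoretic Faltings-height bound for quaternionic-modular elliptic curves to be effective and uniform, with logarithm $\ll\log M+(\log N_n)^{o(1)}$, and one must track carefully — though coprimality of $A$ and $B$ makes this harmless — the small discrepancies among $D(n)$, $\Delta_{\mathrm{min}}(E_n)$ and $N_n$ when matching levels.
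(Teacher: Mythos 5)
Your reduction steps are sound and match the paper's: non-isotriviality, quasiminimality via the resultant of $A$ and $B$, the comparison $h_{\rm Fa}(E_n)\asymp\log|n|$, the fact that $D$ has at least two distinct roots, and the target inequality $\log\log|n|\ll\sqrt{(\log^* N_n)\log_2^* N_n}$. Your linear-forms step is also essentially correct and yields $\log\log|n|\ll\omega(N_n)\log_2^* N_n$. The fatal problem is the second half of your dichotomy. You invoke a bound of the shape $h_{\rm Fa}(E_n)\ll M^{O(1)}(\log N_n)^{O(1)}$, where $M$ is the residual level of a quaternionic parametrization by $X_0^{D_0}(M)$ with $D_0M=N_n$ and $\log M\ll(\log N_n)/\omega(N_n)$. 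No such theorem is available. The unconditional height bound coming from modularity (Theorem \ref{thm:additive_bound}) is $\log D_E\ll N_E\log N_E$, governed by the full degree of the parametrization; passing to a Shimura curve $X_0^{D_0}(M)$ does not shrink this degree, since the genus of $X_0^{D_0}(M)$ is still of order $D_0M=N_n$. If a height bound polynomial in $M$ alone existed, one would obtain far stronger results toward Szpiro for conductors with many prime factors than anything currently known. What the Shimura-curve machinery actually delivers (Cor.~16.3 of \cite{pasten2023shimura}, quoted as Theorem \ref{thm:semistable_bound}) is a bound on the \emph{product} of the exponents, $\prod_{p\mid N_E,\,p\notin S}\nu_p(D_E)\ll N_E^{11/2+\epsilon}$, which controls neither $h_{\rm Fa}$ nor $\sum_p\nu_p(D_E)\log p$ directly.

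Consequently your global dichotomy on $\omega(N_n)$ cannot be closed with the available inputs, and the correct combination is different: one must dichotomize on the size of the individual exponents $\nu_{\mathfrak p}$ rather than on the number of primes. This is what Theorem \ref{thm:main_criteria} does. Fixing a threshold $B=\exp\bigl(\sqrt{(\log R)\log_2 R}\bigr)$ with $R=\rad D(n)$, the product-of-exponents bound (applied to $E_n$ itself, which by coprimality of $A,B$ is semistable outside a fixed finite set) shows that at most $m\ll\sqrt{\log R/\log_2 R}$ primes carry an exponent exceeding $B$; in the linear form in logarithms one then treats only those $m$ primes as separate generators and lumps all small-exponent contributions into a single element of height $\ll B\log R$, which is exactly how the exponent $\sqrt{(\log^* N_n)\log_2^* N_n}$ arises. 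Your proposal, as written, is missing this mechanism, and its Shimura-curve input would need to be replaced by the exponent-counting argument to yield a proof.
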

\begin{corollary}[Subexponential bound for Szpiro's conjecture]\label{CoroSzpiro}
	With the notation of Theorem \ref{ThmMainSzpiro}, if $\Delta_n$ is the minimal discriminant of $E_n$ for $n\in
	\Z\setminus\Sigma$, then for every $\epsilon>0$ one has
	\[
		\log\lvert\Delta_n\rvert \ll h (E_n) \ll_\epsilon N_n^\epsilon.
	\]
\end{corollary}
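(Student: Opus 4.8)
The plan is to deduce Corollary~\ref{CoroSzpiro} from Theorem~\ref{ThmMainSzpiro} together with the classical comparison between the minimal discriminant of an elliptic curve and its Faltings height. The first inequality, $\log|\Delta_n|\ll h_{\rm Fa}(E_n)$, is not special to the family: it holds for every elliptic curve over $\Q$. The second inequality, $h_{\rm Fa}(E_n)\ll_\epsilon N_n^\epsilon$, is where Theorem~\ref{ThmMainSzpiro} enters, and it reduces to the elementary fact that $\exp\!\big(\kappa\sqrt{(\log t)(\log_2 t)}\big)=t^{o(1)}$ as $t\to\infty$.

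For the first inequality I would use the standard relation $\log|\Delta_{\min}(E)| = 12\,h_{\rm Fa}(E) + \Phi(E)$, where $\Phi(E) = 6\log(\operatorname{Im}\tau)+24\log|\eta(\tau)|$ (up to an absolute additive constant) for any $\tau$ in the standard fundamental domain with $E(\C)\cong\C/(\Z+\Z\tau)$ after scaling the period lattice; here I would combine the $\SL_2(\Z)$-invariance of the quantity on the right with the fact that $\Phi$ tends to $-\infty$ as $\operatorname{Im}\tau\to\infty$ to conclude that $\Phi(E)$ is bounded above by an absolute constant, hence $\log|\Delta_{\min}(E)| \le 12\,h_{\rm Fa}(E) + O(1)$. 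Applying this with $E=E_n$ gives the first displayed inequality (the $O(1)$ being suppressed in the $\ll$ notation, as is customary for heights; equivalently, it is literally true in the form $\log|\Delta_n| \ll \max(1,h_{\rm Fa}(E_n))$), and in any event the additive constant is harmless for the final bound since $N_n^\epsilon\ge 1$.

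For the second inequality, fix $\epsilon>0$. Since $\sqrt{(\log^* t)(\log^*_2 t)}=o(\log t)$ as $t\to\infty$, there is a threshold $T_\epsilon$ with $\kappa\sqrt{(\log^* t)(\log^*_2 t)}\le\epsilon\log t$ for all real $t\ge T_\epsilon$; hence Theorem~\ref{ThmMainSzpiro} gives $h_{\rm Fa}(E_n)\le\exp\!\big(\kappa\sqrt{(\log^* N_n)(\log^*_2 N_n)}\big)\le N_n^\epsilon$ for every $n\in\Z\setminus\Sigma$ with $N_n\ge T_\epsilon$. For the remaining $n$ one has $N_n<T_\epsilon$, so the same theorem bounds $h_{\rm Fa}(E_n)$ by the constant $M_\epsilon:=\sup_{1\le t\le T_\epsilon}\exp\!\big(\kappa\sqrt{(\log^* t)(\log^*_2 t)}\big)<\infty$, while $N_n^\epsilon\ge 1$; thus $h_{\rm Fa}(E_n)\le M_\epsilon N_n^\epsilon$ in that range as well. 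Combining both ranges yields $h_{\rm Fa}(E_n)\ll_\epsilon N_n^\epsilon$ for all $n\in\Z\setminus\Sigma$, and chaining this with the first inequality gives $\log|\Delta_n|\ll_\epsilon N_n^\epsilon$. I do not expect any genuine obstacle: the argument is essentially an elementary repackaging of Theorem~\ref{ThmMainSzpiro}, the only points needing a little care being the absolute additive constant in the discriminant--height comparison and the passage from the $\sqrt{(\log N_n)(\log_2 N_n)}$-scale bound of the theorem to a fixed power of the conductor.
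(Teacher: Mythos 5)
Your argument is correct and matches the paper's (implicit) reasoning: the paper cites the standard comparison $\log|\Delta_E|\ll h_{\rm Fa}(E)$ from Frey and Silverman — which you reprove via the $\SL_2(\Z)$-invariant quantity $(\operatorname{Im}\tau)^6|\eta(\tau)|^{24}$ — and the second inequality is exactly the elementary observation that the subexponential bound of Theorem~\ref{ThmMainSzpiro} is $O_\epsilon(N_n^\epsilon)$. Your care with the additive constant and the small-conductor range is appropriate but routine; there is no gap.
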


On the other hand, regarding the largest prime factor of polynomial values, we prove:

\begin{theorem}\label{ThmMainP}
	Let $f(x) \in \Z[x]$ be a quadratic polynomial with two complex roots or a cubic of the form $f(x) = (ax + b)^3 + c$ with $a,c\ne 0$.
Then we have 
$$
\rad(f(n)) \ge \exp\left(\kappa_f \cdot\frac{(\log^*_2n)^2}{\log^*_3 n}\right)
$$
for certain constant $\kappa_f>0$ depending only on $f$, and
$$
P(f(n)) \gg_f \frac{(\log^*_2n)^2}{\log^*_3 n}.
$$
\end{theorem}

Here, as usual, for a non-zero integer $m$ we let its radical $\rad(m)$ be the largest positive squarefree divisor of $m$. In the next subsections we provide further discussion on the context of these results.

\subsection{Szpiro's conjecture}
\label{sec:history_Szpiro}
It is a conjecture by Szpiro \cite{szpiroDC} that there is a constant $c > 0$ such that for every elliptic curve $E$ over $\Q$ with minimal discriminant $\Delta_E$ and conductor $N_E$, one has
$\lvert\Delta_E\rvert \le N_E^c$. This problem remains open and it is deep; for instance, it implies a form of the $abc$ conjecture.

There is also a related problem known as the \emph{height conjecture} \cite{frey1989links}.
This conjecture predicts that for every elliptic curve $E$ over $\Q$ one has $h(E)\ll \log N_E$. The height conjecture implies Szpiro's conjecture because $\log |\Delta_E|\ll h(E)$, see
\cite{frey1989links,silverman86heights}.
Conversely, Szpiro's conjecture implies a form of the $abc$, and the $abc$ conjecture implies the height conjecture (see
\cite{murty2013modular} for further discussion).

At present, the strongest unconditional results towards the height conjecture and Szpiro's conjecture are due to Murty and the second author \cite{murty2013modular}: the result is the exponential bound $h(E)\ll N_E\log N_E$.

For some restricted families of elliptic curves one has stronger bounds.
For instance, the results in \cite{stewart2001abc} imply that when $E$ is the Frey curve attached to an $abc$ triple one
has $h(E)\ll_\epsilon N_E^{1/3 +\epsilon}$, but this bound is exponential too.
Theorem 1.4 in \cite{pasten2024n2+1} implies a \emph{subexponential bound}; namely, there is a constant $\kappa>0$ such that
\[
	h(E) \ll_\eta \exp\mathopen{}\left( \kappa \sqrt{(\log N_E)\log_2 N_E} \right)\mathclose{},
\]
for Frey curves $E$ attached to $abc$ triples with $a \le c^{1 - \eta}$ for any fixed $\eta > 0$. Theorem \ref{ThmMainSzpiro} establishes the same bound for elliptic curves varying in a one-parameter family, that is, for fibres of an elliptic surface.


\subsection{The greatest prime factor}
\label{sec:history_P}
It is a classical problem to study the growth of $P(f(n))$ for a given polynomial $f(x)\in \Z[x]$.
This theme can be traced back at least to the works of St\"ormer \cite{stormer97pell} and \cite{stormer98indeterminee}
in the late 19th century.
Mahler \cite{mahler33grossten} in 1933 and Chowla \cite{chowla34largest} in 1934 proved
\begin{equation}
	P(f(n)) \gg \log^*_2 n
	\label{eqn:generalold_ineq}
\end{equation}
for the polynomial $f(n)=n^2+1$. This bound was later established for quadratic polynomials $f(n)$ by Schinzel \cite{schinzel67gelfond},
and for cubic polynomials by Keates \cite{keates68greatest}, as long as $f$ has at least two complex roots. After a sequence of further articles, the bound \eqref{eqn:generalold_ineq} was established for all $f(x)\in \Z[x]$ with at least two complex roots, see \cite{shorey76greatest} and the references therein.


In 2001, Stewart and Yu  obtained the following bound
for \emph{reducible} quadratic polynomials $f(x)$ (see the equation (7) in page~171 of \cite{stewart2001abc})
\begin{equation}
	P(f(n)) \gg (\log^*_2 n) \frac{\log^*_3n}{\log^*_4n}.
	\label{eqn:stewart_yu}
\end{equation}
A similar bound was obtained by Haristoy \cite{haristoy2003exponentielles} in 2003 for certain binary forms which, in particular, implies \eqref{eqn:stewart_yu} for all quadratic polynomials with two complex roots. Finally, in 2006 the bound \eqref{eqn:stewart_yu} was proved in full generality by Gy\H ory and Yu, see Cor. 5 in \cite{gyory2006form_eq}.

Theorem \ref{ThmMainP} gives a substantial improvement in the quadratic and cubic cases.

This result is a generalization of the bounds obtained in \cite{pasten2024n2+1} by the second author, where the same
estimate is proved for the polynomial $f(n)=n^2+1$ as well as for all reducible quadratic polynomials with two complex
roots (see Cor.~1.5 in \cite{pasten2024n2+1}).
Therefore, the new results in the quadratic case concern irreducible polynomials.

In a different direction, many authors have considered the problem of giving lower bounds for $P(f(n))$ for at least one
value of $n$ on an interval. In this case better bounds are obtained, see for instance \cite{shorey76greatest}. We note,
however, that the problem that we consider here concerns bounds valid for \emph{every} sufficiently large value of $n$.
This makes a difference as, in practice, it can happen that $P(f(n_0))$ is small for a certain $n_0$ although for
neighboring values of $n$ one has that $P(f(n))$ is large.
For instance, the following example was found by Luca \cite{luca2004primitive}:
\[\begin{array}{cr}
	n & P(n^2 + 1) \\
	\hline
	24\,208\,141 & 119\,529\,857 \\
	24\,208\,142 & 121\,140\,377 \\
	24\,208\,143 & 67\,749\,617\,053 \\
	24\,208\,144 & 89 \\
	24\,208\,145 & 5\,218\,192\,121 \\
	24\,208\,146 & 586\,034\,332\,757\,317 \\
	24\,208\,147 & 58\,603\,438\,117\,361 \\
	24\,208\,148 & 117\,206\,885\,917\,981 \\
	24\,208\,149 & 2\,292\,977\,009 \\
	24\,208\,150 & 127\,793\,609 \\
\end{array}\]

Previous works on lower bounds for $P(f(n))$ mainly used linear forms in complex and $p$-adic  logarithms as explained,
for instance, in section 9.6 of \cite{evertse:unit}.
Instead, we follow an approach inspired in \cite{pasten2024n2+1} which takes as a new input estimates from
\cite{pasten2023shimura} and \cite{murty2013modular} that build on the theory of Shimura curves and modularity of
elliptic curves.
At a technical level, our new contribution has two main aspects: on the one hand, we construct new elliptic curves
tailored to the particular polynomials under consideration, and on the other hand, we introduce to the problem some
tools from algebraic number theory to deal with number fields with more complicated arithmetic --- in
\cite{pasten2024n2+1}, only the field $\Q(i)$ is necessary.
We mention that, unlike \cite{pasten2024n2+1}, we are able to deal with the case of cubic polynomials, not only
quadratic. Leaving open the possibility for further work, we proved the criterion given below in
Theorem~\ref{thm:main_criteria} in more generality (not only the quadratic and cubic cases) as a tool for obtaining
improved lower bounds on $\rad(f(n))$ and $P(f(n))$.


\subsection{The methods}

For a prime $p$ let $\nu_p$ be the $p$-adic valuation on $\Z$.  As a key tool for proving our main results, we establish the following general criterion for obtaining lower bounds for radicals of polynomial values:

\begin{theorem}\label{thm:main_criteria}
	Let $F(x) \in \Z[x]$ be a polynomial with at least two different complex roots that satisfies the following property:
	there exists a constant $\mu := \mu(F) > 0$ such that for all but finitely many $n \in \Z$ we have
	\begin{equation}
		\prod_{p \mid F(n)} \nu_p(F(n)) \ll_F \rad(F(n))^\mu.
		\label{eqn:main_thm_condition}
	\end{equation}
	Then there exists a constant $\kappa := \kappa(F) > 0$ such that for all $n\in \Z$ we have
	\[
		\log |n| \le \exp\mathopen{}\big( \kappa \sqrt{(\log^*\rad F(n))\log^*_2\rad F(n)} \big)\mathclose{}.
	\]
\end{theorem}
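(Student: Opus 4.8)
The plan is to reduce the statement to an application of the subexponential $abc$-type estimates coming from the modular/Shimura-curve input (namely the partial progress on $abc$ used in \cite{pasten2024n2+1}, built on \cite{pasten2023shimura} and \cite{murty2013modular}), with the hypothesis \eqref{eqn:main_thm_condition} serving to control the error terms. Write $m = F(n)$ and factor $m = s\cdot r^{2}\cdot \ell^{3}\cdots$ — more precisely, decompose $m$ according to the exponents of its prime factors. The key quantitative point is that $\log\bigl(m/\rad(m)\bigr) = \sum_{p\mid m}(\nu_p(m)-1)\log p$, and the product hypothesis $\prod_{p\mid F(n)}\nu_p(F(n))\ll_F \rad(F(n))^{\mu}$ forces, via the elementary inequality $\sum(\nu_p-1)\log p \le \log\prod \nu_p$ (since $\nu_p - 1 \le \log_2 \nu_p \cdot \text{(something)}$; more simply $2^{\nu_p-1}\le \nu_p$ fails but $\nu_p \le 2^{\nu_p-1}$ for $\nu_p\ge 1$ is false too — rather use $\nu_p - 1 \le \nu_p$ and bound $\log p$ crudely, or better: observe $\prod_p p^{\nu_p-1}$ divides a number whose radical-free part is controlled), that the non-squarefree part of $F(n)$ is at most $\rad(F(n))^{O(\mu)}$ times a bounded factor. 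I will carry this out carefully: the honest statement I want is $\log\bigl(|F(n)|/\rad F(n)\bigr) \ll_F \log\rad F(n)$, i.e. $|F(n)| \ll_F \rad(F(n))^{1+O(1)}$, with an explicit power.

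Next, since $F$ has at least two distinct complex roots, after possibly replacing $F$ by an associate and shifting, I can exhibit an $abc$-type relation among the values: the resultant trick shows that for suitable fixed polynomials (or after passing to a root field), $F(n)$ together with a linear or lower-degree companion and their difference gives an $abc$ triple whose parts have radical $\ll_F \rad(F(n))\cdot n^{o(1)}$, in fact $\ll_F \rad(F(n))$ up to bounded factors coming from the fixed resultant. Concretely, if $\alpha,\beta$ are two distinct roots, then $F(n) = a\prod(n-\alpha_i)$ and the Vandermonde/resultant identities relate $n-\alpha$, $n-\beta$ and $\alpha-\beta$; clearing denominators over $\Z$ produces an $abc$ triple supported on primes dividing $F(n)$ together with a fixed finite set. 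One then feeds this triple into the subexponential $abc$ bound: $\log(\text{largest part}) \le \exp\bigl(\kappa\sqrt{(\log R)\log_2 R}\bigr)$ where $R$ is the radical of the triple. Since the largest part is comparable to $|F(n)|\gg n$ (degree $\ge 1$) and $R \ll_F \rad(F(n))$ by the previous paragraph, we get $\log n \le \exp\bigl(\kappa' \sqrt{(\log\rad F(n))\log_2\rad F(n)}\bigr)$, which is the claim.

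The main obstacle — and the step requiring genuine care rather than routine manipulation — is arranging the $abc$ triple so that the subexponential hypothesis in \cite{pasten2024n2+1} (which, as noted in the introduction, applies to Frey curves attached to $abc$ triples with $a \le c^{1-\eta}$, i.e.\ with an \emph{asymmetry} condition) is actually satisfied. The triple $(n-\alpha)$ vs. $(n-\beta)$ is badly balanced in the wrong way — both parts are of size $\sim n$ — so I cannot directly invoke the asymmetric-$abc$ bound. The fix is to choose the companion polynomial to have strictly smaller degree: e.g.\ for $F$ of degree $d\ge 2$, compare $F(n)$ (size $n^d$) against a degree-$1$ or degree-$(d-1)$ value and against a fixed power, so that one part dominates and the asymmetry $a\le c^{1-\eta}$ holds with $\eta = 1/d + o(1)$. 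This is precisely why the hypothesis \eqref{eqn:main_thm_condition} on the valuations is needed — without controlling $|F(n)|/\rad F(n)$ one cannot certify that the \emph{radical} of the dominant part stays small, which is exactly the input the asymmetric bound consumes. Verifying that the shifted/companion construction preserves coprimality and keeps all bad primes inside $\supp(F(n))\cup\{\text{fixed set}\}$, uniformly in $n$, is the technical heart of the argument; everything after that is bookkeeping with the two-variable function $\exp\bigl(\kappa\sqrt{(\log R)\log_2 R}\bigr)$.
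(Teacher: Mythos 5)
Your proposal has a fatal gap at its very first step, and the overall architecture is inverted relative to what the hypothesis \eqref{eqn:main_thm_condition} can actually deliver. You claim that $\prod_{p\mid F(n)}\nu_p(F(n))\ll_F \rad(F(n))^{\mu}$ forces $\log\bigl(|F(n)|/\rad F(n)\bigr)\ll_F \log\rad F(n)$. This is false: the hypothesis bounds $\sum_p\log\nu_p$, not $\sum_p(\nu_p-1)\log p$. Take $F(n)=2^{a}m$ with $m$ squarefree and $a\asymp\log n$; then $\prod_p\nu_p\approx a$ is at most $\rad(F(n))^{\mu}$ as soon as $\rad(F(n))\gg(\log n)^{1/\mu}$, yet $\log\bigl(|F(n)|/\rad F(n)\bigr)\asymp\log n$, which is exponentially larger than $\log\rad F(n)$ in the regime the theorem is about (the whole point is that $\log R$ can be as small as roughly $(\log_2 n)^2/\log_3 n$). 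Indeed, if your claimed deduction were valid, you would immediately get $\log n\ll\log\rad F(n)$, i.e.\ essentially the full radical conjecture for $F$, making the theorem's subexponential conclusion pointless. You half-noticed the problem mid-sentence ("$2^{\nu_p-1}\le\nu_p$ fails but \dots") and then asserted the conclusion anyway; that assertion does not survive scrutiny.

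The second structural problem is that for irreducible $F$ the triple $(n-\alpha)-(n-\beta)=\beta-\alpha$ lives in $\mathcal{O}_K$ with $K=\Q(\alpha,\beta)\ne\Q$, and the subexponential asymmetric $abc$ bound of \cite{pasten2024n2+1} is a statement about integer triples and Frey curves over $\Q$; there is no version over number fields available to you here. The paper's proof of this criterion does not use the modular/Shimura input at all: it applies linear forms in logarithms (Theorem~\ref{thm:linforms}) to the quantity $\xi=\bigl(\alpha_1(\alpha_2n-\beta_2)/(\alpha_2(\alpha_1n-\beta_1))\bigr)^{\mathbf{h}_K}$, which is within $O(1/n)$ of $1$, so that $-\log|1-\xi|\asymp\log n$ is bounded above by a product of heights of generators $\xi_j$ of $\mathfrak{p}_j^{\mathbf{h}_K}$. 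The hypothesis \eqref{eqn:main_thm_condition} enters only to bound the \emph{number} $m$ of prime ideals whose exponent exceeds $B=\exp\bigl(\sqrt{(\log R)\log_2 R}\bigr)$, via $B^m\le\prod_j\nu_{\mathfrak{p}_j}\ll R^{g\mu}$, hence $m\ll\sqrt{\log R/\log_2 R}$; this tames both the $\kappa^m$ factor and the product of $m$ heights each of size $O(\log R)$. The modularity and Shimura-curve estimates are used elsewhere in the paper, to \emph{verify} \eqref{eqn:main_thm_condition} for the specific quadratic and cubic polynomials — not to prove the criterion. You would need to discard the $abc$-triple strategy and rework the argument along these lines.
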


Given an elliptic curve $E$ over $\Q$ we let $D_E$ and $N_E$ be the absolute value of its minimal discriminant and the conductor of $E$ respectively.  In order to deduce Theorem \ref{ThmMainSzpiro} we verify Condition \eqref{eqn:main_thm_condition} using the following results coming from the theory of modular forms and Shimura curves respectively.

\begin{theorem}[Thm.~7.1 in \cite{murty2013modular}]\label{thm:additive_bound}
	There is an absolute and effective constant $\kappa > 0$ with the following property:  for each elliptic curve $E$ over $\Q$ one has
	\[
		\log(D_E) \le \kappa \cdot N_E \log N_E.
	\]
\end{theorem}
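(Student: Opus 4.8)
The strategy is to combine the modularity theorem with the classical dictionary relating the minimal discriminant of an elliptic curve, its period lattice, and the degree of its modular parametrization, and then to insert the (only super-polynomially strong) unconditional upper bounds that are available for that degree.

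First I would pass to the strong Weil (optimal) curve $E_0$ in the isogeny class of $E$. Since $\Q$-rational isogeny degrees of elliptic curves are absolutely bounded and isogenous curves over $\Q$ have the same conductor, one has $\log D_E\ll\log D_{E_0}$ with an absolute implied constant, so it suffices to bound $\log D_{E_0}$; thus we may assume $E=E_0$. By the modularity theorem there is then a finite morphism $\phi\colon X_0(N)\to E$ with $N=N_E$ and a Manin constant $c_E\in\Z_{\ge 1}$ such that $\phi^{*}\omega_E=c_E\cdot 2\pi i\,f(z)\,dz$, where $\omega_E$ is the N\'eron differential of the minimal model of $E$ and $f=\sum_{n\ge 1}a_n q^{n}$ is the associated normalized newform, with $a_1=1$.

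Next I would run two period computations. Write $E(\C)=\C/\Lambda_E$, with $\Lambda_E$ the period lattice of $\omega_E$ and $\tau_E\in\Hbb$ a ratio of periods. The identities $\Delta(\Lambda_E)=\Delta_{\min}(E)$ and $g_2^{3}-27g_3^{2}=(2\pi)^{12}\eta^{24}$, rescaled by the covolume, give
\[
D_E\cdot\vol(\C/\Lambda_E)^{6}=(2\pi)^{12}\,(\Img\tau_E)^{6}\,|\eta(\tau_E)|^{24},
\]
whose right-hand side is at most an absolute constant because $(\Img\tau)^{6}|\eta(\tau)|^{24}$ is $\SL_2(\Z)$-invariant, continuous on $X(1)$, and vanishes at the cusp; in particular $\log D_E\le 12\,h_{\rm Fa}(E)+O(1)$. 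On the other hand, pulling $\omega_E$ back along the degree-$\deg\phi$ morphism $\phi$ and using $\phi^{*}\omega_E=c_E\cdot 2\pi i\,f\,dz$ gives
\[
\deg(\phi)\cdot\vol(\C/\Lambda_E)=c_E^{2}\,(2\pi)^{2}\int_{X_0(N)(\C)}|f(z)|^{2}\,dx\,dy,
\]
and the Petersson integral on the right is bounded below by an absolute positive constant: the strip $\{0\le x<1,\ y\ge 1\}$ injects into $X_0(N)(\C)$ for every $N$, and there Parseval together with $a_1=1$ gives $\int|f|^{2}\,dx\,dy\ge e^{-4\pi}/(4\pi)$. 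Combining the two displays with $c_E\ge 1$ yields the key inequality
\[
\log D_E\ \le\ 6\log(\deg\phi)+O(1).
\]

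The remaining, and only genuinely difficult, step is an unconditional bound of the shape $\log\deg(\phi)\ll N_E\log N_E$ for the modular degree. I would extract this from general estimates on $X_0(N)$ --- for instance Arakelov-theoretic bounds for the self-intersection of the relative dualizing sheaf, which control the intersection numbers governing $\deg\phi$, or, more concretely, from $|a_n|\le d(n)\sqrt n$ and $\dim S_2(\Gamma_0(N))\ll N$, which bound the integral structure of $S_2(\Gamma_0(N))$ and hence $\vol(\C/\Lambda_E)$ from below. I expect this to be the main obstacle, and it is essentially best possible with current technology: a \emph{polynomial} bound $\deg\phi\ll_{\epsilon}N_E^{2+\epsilon}$ would, through the key inequality, give $\log D_E\ll\log N_E$, i.e.\ Szpiro's conjecture. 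Inserting the available bound $\log\deg\phi\ll N_E\log N_E$ into the key inequality produces $\log D_E\le\kappa\,N_E\log N_E$ with an absolute and effective $\kappa$, completing the argument.
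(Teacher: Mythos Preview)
This theorem is not proved in the present paper at all: it is quoted verbatim as Thm.~7.1 of \cite{murty2013modular} and used as a black box, so there is no ``paper's own proof'' to compare against. What you have written is a sketch of how the Murty--Pasten argument goes, and the overall architecture---reduce to the optimal curve, use modularity to pull back the N\'eron differential, compare the two period computations, and arrive at $\log D_E\le 6\log\deg\phi+O(1)$---is correct and is indeed the skeleton of their proof.

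There is, however, a logical wobble in your final step. You say that the remaining task is to prove $\log\deg\phi\ll N_E\log N_E$, and you propose to do this by using $|a_n|\le d(n)\sqrt{n}$ and $\dim S_2(\Gamma_0(N))\ll N$ to bound $\vol(\C/\Lambda_E)$ from below. But if you can bound $\vol(\C/\Lambda_E)$ from below by $\exp(-O(N\log N))$ directly, then your \emph{first} display already gives $\log D_E\ll N\log N$ without ever mentioning $\deg\phi$; the whole second period computation becomes superfluous. Conversely, if you insist on going through $\deg\phi$, then by your second display $\deg\phi=c_E^2\cdot 4\pi^2\|f\|^2/\vol(\C/\Lambda_E)$, so an upper bound on $\deg\phi$ is again equivalent to a lower bound on $\vol(\C/\Lambda_E)$ (since $\|f\|^2$ is easily bounded above). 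Either way the genuine content is the lower bound on the covolume, and your two sentences about ``Arakelov-theoretic bounds'' or ``the integral structure of $S_2(\Gamma_0(N))$'' do not yet constitute an argument. In \cite{murty2013modular} this is where the real work happens: one controls the congruence number (hence $\deg\phi$) via the special value $L(\mathrm{Sym}^2 f,2)$ and analytic bounds for it, which is what produces the factor $N_E\log N_E$. Your outline is on the right track, but the step you flag as ``the main obstacle'' is not merely an obstacle---it is essentially the entire theorem, and it needs the $L$-function input rather than the vague estimates you list.
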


\begin{theorem}[Cor.~16.3 in \cite{pasten2023shimura}]\label{thm:semistable_bound}
	Let $S$ be a finite set of prime numbers and let $\epsilon > 0$ be a positive real number.
	There is a constant $\kappa := \kappa(S, \epsilon) > 0$ with the following property:

	For each elliptic curve  $E$ over $\Q$ which is semistable outside of $S$ we have
	\[
		\prod_{\substack{p \mid N_E \\ p\notin S}} \nu_p(D_E) \le \kappa \cdot N_E^{11/2 + \epsilon}.
	\]
\end{theorem}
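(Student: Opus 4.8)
The plan is to reduce the bound on $\prod_{p\mid N_E,\,p\notin S}\nu_p(D_E)$ to an estimate for the degree of a Shimura curve parametrization of $E$, and then to invoke the Arakelov-theoretic estimate that is the technical heart of \cite{pasten2023shimura}. First I would record the local picture: if $p\mid N_E$ and $E$ is semistable at $p$ with bad reduction, then the reduction is multiplicative, the Kodaira type is $I_{n_p}$ with $n_p=\nu_p(D_E)=-\nu_p(j_E)>0$, and by Tate uniformization $E_{\Q_p}\cong\mathbb{G}_m/q_p^{\Z}$ with $\nu_p(q_p)=n_p$, so the order of the component group of the N\'eron model at $p$ is $n_p$ (or $n_p/2$ in the nonsplit case, which is irrelevant below). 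Writing $M:=\prod_{p\mid N_E,\,p\notin S}p$, the quantity to bound is thus $\prod_{p\mid M}n_p$.

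It is worth noting why elementary inputs do not suffice: Theorem \ref{thm:additive_bound} only yields $\sum_{p\mid M}\log n_p\le\sum_{p\mid M}n_p\log p\ll N_E\log N_E$, which is useless once $M$ has on the order of $\log N_E$ prime factors, and the trouble lies exactly with curves whose minimal discriminant is huge relative to the conductor. What rescues the situation is a genuinely multiplicative input coming from modularity and the Jacquet--Langlands correspondence. Let $f$ be the newform of level $N_E$ attached to $E$; for an admissible factorization $N_E=N^+N^-$ (with $N^-$ a squarefree product of primes of multiplicative reduction coprime to $S$, of even cardinality) one transfers $f$ to a quaternionic form on the Shimura curve $X^{N^-}_0(N^+)$ attached to the indefinite quaternion algebra ramified exactly at the primes of $N^-$, obtaining a parametrization $\pi_{N^-}\colon X^{N^-}_0(N^+)\to E$ of degree $\delta_{N^-}\in\Z_{\ge1}$. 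The key geometric input is that, by a Ribet--Takahashi type analysis of how these degrees vary with the level (equivalently, an intersection-theoretic computation on the arithmetic surfaces involved), $\prod_{p\mid M}n_p$ is bounded by $\delta_{N^-}$ for a suitable admissible $N^-$, up to a factor depending only on $S$; the point is that the $n_p$ appear \emph{multiplicatively}, as ratios of parametrization degrees, rather than additively. It therefore suffices to prove $\delta_{N^-}\ll_{S,\epsilon}N_E^{11/2+\epsilon}$.

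The remaining step is the Arakelov-theoretic estimate for the degree of a Shimura curve parametrization carried out in \cite{pasten2023shimura}. One equips $X^{N^-}_0(N^+)$ with its hyperbolic (Petersson) metric and controls, uniformly in the level, the arithmetic invariants that enter the arithmetic Riemann--Roch / Arakelov inequality bounding $\delta_{N^-}\cdot h_{\rm Fa}(E)$: the Faltings height of the Jacobian, the self-intersection $\overline{\omega}^2$ of the relative dualizing sheaf, the heights of the CM points used to build sections and to compute degrees, and the relevant periods. Tracking the polynomially many error terms, this produces $\delta_{N^-}\le\kappa(S,\epsilon)\,N_E^{11/2+\epsilon}$, the exponent $11/2$ being the accumulation of these polynomial losses; combined with the previous paragraph, the theorem follows.

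The main obstacle is precisely this last estimate. In contrast with the classical modular curves $X_0(N)$, the Shimura curves $X^{N^-}_0(N^+)$ have no cusps, so $q$-expansions, Eisenstein series, and the usual Fourier-analytic tools are unavailable; instead one must bound Green's functions, heights of CM (Heegner) points, and periods on a compact hyperbolic Riemann surface \emph{effectively and uniformly} in both $N^+$ and $N^-$, which brings in subconvexity- and equidistribution-type estimates together with the $p$-adic Cerednik--Drinfeld uniformization at the primes dividing $N^-$ to describe the bad-reduction geometry. By comparison, the reduction in the first two paragraphs is essentially formal once the comparison of parametrization degrees is available.
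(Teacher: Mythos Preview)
The paper does not prove this theorem at all: it is quoted verbatim as Cor.~16.3 of \cite{pasten2023shimura} and used as a black box. There is therefore nothing in the present paper to compare your proposal against. What you have written is a high-level outline of the argument in \cite{pasten2023shimura} itself, and at that level it is broadly accurate: the reduction via Ribet--Takahashi comparison of parametrization degrees turns $\prod_{p\mid M} n_p$ into a ratio of Shimura-curve modular degrees, and the main work is the Arakelov estimate bounding those degrees polynomially in the level; you also correctly identify the absence of cusps (hence of $q$-expansions) as the central analytic difficulty. If anything, your sketch understates how delicate the Ribet--Takahashi step is in the presence of congruence primes and additive places, but since the present paper only invokes the result, that is beside the point here.
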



Regarding Theorem \ref{ThmMainP}, we will see that it follows from Theorem \ref{ThmMainSzpiro} after a suitable construction of elliptic curves.



\section{Preliminaries} 


\subsection{Linear forms in logarithms} Let $K$ be a number field of degree $g$ and for $\alpha\in K$ let $h(\alpha)$ be its height normalized to $\Q$. Given a nonzero ideal $\mathfrak{a} \subseteq \mathcal{O}_K$, we define the (absolute) norm
as $\numnorm\mathfrak{a} = |\mathcal{O}_K/\mathfrak{a}|$.
For an infinite place $v \in M_K^\infty$, we define $\numnorm(v) := 2$,
and for a finite place $v \in M_K^0$ induced by a prime ideal $\mathfrak{p}$,
we denote $\numnorm(v) := \numnorm\mathfrak{p}$. We need the following result coming from the theory of linear forms in logarithms, which builds on earlier work by Matveev and Yu (see Thm.~4.2.1 in \cite{evertse:unit}).

\begin{theorem}\label{thm:linforms}
	Let $K$ be a number field,
	$\Gamma \le K^\times$ be a finitely generated multiplicative subgroup
	and $\xi_1, \dots, \xi_m$ be a system of generators for $\Gamma/\Gamma_{\rm tors}$.
	Then, for any place $v \in M_K$ and every $\xi \in \Gamma-\{1\}$ we have
	\[
		-\log\mathopen{}\lvert1 - \xi\rvert_v
		< \kappa^m \cdot \frac{\numnorm(v)}{\log\numnorm(v)}
		\log\max\left\{ e, \numnorm(v)h(\xi) \right\} \prod_{j=1}^{m} h(\xi_j),
	\]
	where $\kappa$ is an effectively computable constant depending only on $g=[K:\Q]$.
\end{theorem}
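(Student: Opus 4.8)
The plan is to reduce this to the classical theory of linear forms in complex and $p$-adic logarithms: the statement is essentially a convenient packaging of the explicit estimates of Matveev (archimedean case) and Yu ($p$-adic case), along the lines explained in section~9.6 of \cite{evertse:unit}. We may assume that $\lvert 1-\alpha\xi\rvert_v$ is small, since otherwise $-\log\lvert 1-\alpha\xi\rvert_v$ is bounded while the right-hand side is $\gg 1$ (using $\numnorm(v)\ge 2$ together with the fact that each $h(\xi_j)$ is bounded below by a positive constant depending only on $K$, as $\xi_j$ has infinite order). Write $\xi=\zeta\,\xi_1^{b_1}\cdots\xi_m^{b_m}$ with $\zeta$ a root of unity in $K$ and $b_j\in\Z$, and put $B:=\max_j\lvert b_j\rvert$ and $\alpha_0:=\alpha\zeta$, so that $h(\alpha_0)=h(\alpha)$ and
\[
 1-\alpha\xi = 1-\alpha_0\,\xi_1^{b_1}\cdots\xi_m^{b_m}.
\]
Hence $\lvert 1-\alpha\xi\rvert_v$ being small means precisely that the linear form $\Lambda_v:=\log\alpha_0+b_1\log\xi_1+\cdots+b_m\log\xi_m$ is $v$-adically small, for a suitable branch of the complex (resp.\ $p$-adic) logarithm, with $\alpha_0$ entering as a multiplier whose height $h(\alpha)$ is the only new parameter relative to the fixed data $\xi_1,\dots,\xi_m$.

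Suppose first that $v$ is archimedean, so that $\numnorm(v)=2$ and the factor $\numnorm(v)/\log\numnorm(v)$ is an absolute constant; then it suffices to bound $-\log\lvert\Lambda_v\rvert$ by $\kappa\prod_{j=1}^m h(\xi_j)\cdot\log\max\{e,h(\xi)/\max\{h(\alpha),1\}\}$. Applying Matveev's theorem to $\alpha_0\,\xi_1^{b_1}\cdots\xi_m^{b_m}-1$ in the refined form in which the multiplier $\alpha_0$ (equivalently, the logarithm $\log\alpha_0$ carrying the coefficient $1$) is treated sharply yields a lower bound
\[
 \lvert\Lambda_v\rvert > \exp\Bigl(-\kappa'\prod_{j=1}^m h(\xi_j)\cdot\log\max\{e,B/\max\{h(\alpha),1\}\}\Bigr),
\]
in which $h(\alpha)$ enters only logarithmically. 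Next suppose $v$ is non-archimedean, induced by a prime $\mathfrak{p}$ over a rational prime $p$; then $-\log\lvert 1-\alpha\xi\rvert_v=\nu_{\mathfrak{p}}(1-\alpha\xi)\log\numnorm\mathfrak{p}$ up to a fixed normalization, and Yu's theorem on $p$-adic linear forms in logarithms bounds $\nu_{\mathfrak{p}}(\Lambda_v)$ by
\[
 \kappa'\cdot\frac{\numnorm\mathfrak{p}}{\log\numnorm\mathfrak{p}}\cdot\prod_{j=1}^m h(\xi_j)\cdot\log\max\{e,\numnorm\mathfrak{p}\cdot B/\max\{h(\alpha),1\}\},
\]
which accounts for both the arithmetic factor $\numnorm(v)/\log\numnorm(v)$ and the extra $\numnorm(v)$ appearing inside the logarithm in the statement.

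It remains to trade $B$ for $h(\xi)$. Let $S$ be the finite set of places of $K$ consisting of the archimedean places together with those dividing some $\xi_j$, so that $\xi_1,\dots,\xi_m$ are $S$-units; being multiplicatively independent modulo torsion, they span a sublattice of rank $m$ in the $S$-unit lattice, on which the logarithmic Weil height is comparable, up to positive constants, to any fixed norm. Therefore $B=\max_j\lvert b_j\rvert\le c\cdot h(\xi)$ for a constant $c$ depending on $K$ and on $\xi_1,\dots,\xi_m$; since $h(\xi_j)\ge c(K)>0$ for each $j$, this last dependence can be absorbed into $\prod_{j=1}^m h(\xi_j)$ at the cost of enlarging $\kappa$. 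Substituting $B\ll h(\xi)$ into the two estimates above and collecting constants then gives the claimed inequality.

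The main technical point is obtaining the precise shape of the bound: the sharp arithmetic factor $\numnorm(v)/\log\numnorm(v)$ requires Yu's strongest available explicit $p$-adic estimates rather than cruder bounds, and the appearance of $h(\alpha)$ only through the logarithm $\log\max\{e,\numnorm(v)h(\xi)/\max\{h(\alpha),1\}\}$, rather than as a multiplicative factor $\max\{h(\alpha),1\}$, relies on the refinements of Matveev's and Yu's theorems that handle a distinguished variable (here the multiplier $\alpha_0$) separately. Granting these inputs, the remaining steps are the elementary manipulations above.
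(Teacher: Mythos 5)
The paper gives no proof of this statement at all---it is quoted verbatim as Thm.~4.2.1 of \cite{evertse:unit}, which is deduced there from Matveev's archimedean and Yu's $p$-adic estimates---and your sketch reconstructs exactly that standard derivation, so it takes essentially the same approach and is correct in outline. The one point to watch is that $\xi_1,\dots,\xi_m$ are only assumed to \emph{generate} $\Gamma/\Gamma_{\rm tors}$, not to be multiplicatively independent, so your step $\max_j\lvert b_j\rvert\le c\,h(\xi)$ (which invokes a rank-$m$ sublattice) needs the preliminary choice of a representation of $\xi$ with controlled exponents, as is done in the cited proof.
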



\subsection{Algebraic integers of controlled height}
As a technical tool for the proof of Theorem~\ref{thm:main_criteria}, we will need the following proposition to compare the height of an element in an ideal to its norm.

\begin{proposition}\label{thm:main_bound}
	There is an effectively computable constant $c > 0$ (depending only on  $K$) with the following property:
	Given any nonzero ideal $\mathfrak{a} \subseteq \mathcal{O}_K$ there exists an element $\gamma \in \mathfrak{a} \setminus \{ 0 \}$ such that
	\[
		h(\gamma) \le c \log\numnorm\mathfrak{a}.
	\]
	Moreover, if $\mathfrak{a}$ is principal, we may choose $\gamma$ to be a generator of $\mathfrak{a}$.
\end{proposition}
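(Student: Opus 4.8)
The plan is to use Minkowski's theorem from the geometry of numbers. First I would fix the number field $K$ of degree $g$, with $r_1$ real and $r_2$ complex places, and recall the standard Minkowski embedding $\sigma \colon K \hookrightarrow \R^{r_1} \times \C^{r_2} \cong \R^g$. Under this embedding, a nonzero ideal $\mathfrak{a} \subseteq \Ocal_K$ maps to a full-rank lattice $\sigma(\mathfrak{a})$ whose covolume is $2^{-r_2} \sqrt{|\Disc K|} \cdot \numnorm\mathfrak{a}$. Next I would apply Minkowski's convex body theorem to a suitably scaled box (a product of intervals and disks) that is symmetric, convex, and has volume just above $2^g$ times the covolume; this produces a nonzero $\gamma \in \mathfrak{a}$ with every archimedean absolute value $|\gamma|_v$ bounded by a constant (depending only on $K$) times $(\numnorm\mathfrak{a})^{1/g}$.

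From this bound on the archimedean valuations I would pass to the Weil height. Since $\gamma \in \Ocal_K$, it is an algebraic integer, so all non-archimedean contributions to $h(\gamma)$ vanish and $h(\gamma) = \frac{1}{[K:\Q]}\sum_{v \mid \infty} [K_v : \R] \log^+ |\gamma|_v$ — here I should be mildly careful with the normalization of absolute values and the local degrees, but the upshot is that $h(\gamma) \le \frac{1}{g}\log\numnorm\mathfrak{a} + c_1$ for an effective constant $c_1 = c_1(K)$ coming from the shape of the Minkowski box and $|\Disc K|$. The second inequality $\frac{1}{g}\log\numnorm\mathfrak{a} + c_1 \le c_2 \log\numnorm\mathfrak{a}$ is then immediate for $\numnorm\mathfrak{a} \ge 2$ (and for $\numnorm\mathfrak{a} = 1$ one takes $\gamma = 1$, $h(\gamma) = 0$), enlarging $c_2$ if necessary.

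For the principal case, suppose $\mathfrak{a} = (\beta)$. Running the argument above inside the lattice $\sigma(\mathfrak{a})$ gives a nonzero $\gamma \in \mathfrak{a}$ of controlled height, but $\gamma$ need not generate $\mathfrak{a}$; however, writing $\gamma = \beta\delta$ with $\delta \in \Ocal_K$, the element $\gamma$ generates $\mathfrak{a}$ exactly when $\delta$ is a unit. To force this, I would instead apply Minkowski to the lattice $\sigma(\mathfrak{a})$ but search for a shortest vector and then correct by a unit: by Dirichlet's unit theorem the unit group acts on the archimedean embedding with cocompact image in the trace-zero hyperplane (in log coordinates), so after multiplying $\gamma$ by an appropriate unit $u$ I can assume $\log|\gamma u|_v$ is balanced across all archimedean $v$, i.e. each $|\gamma u|_v \asymp_K (\numnorm\mathfrak{a})^{1/g}$. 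The point is that $\gamma$ generates $\mathfrak{a}$ if and only if $\numnorm((\gamma)) = \numnorm\mathfrak{a}$, and since $\mathfrak{a} \mid (\gamma)$ always, what I really need is a generator, which exists by hypothesis as some $\beta$; applying the unit-balancing to $\beta$ itself then yields a generator $\gamma = \beta u$ with $h(\gamma) \le \frac{1}{g}\log\numnorm\mathfrak{a} + c_1$, where $c_1$ now also absorbs the diameter of a fundamental domain for the unit action (the regulator of $K$).

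The main obstacle I anticipate is the principal case: getting a \emph{generator} rather than merely a small nonzero element forces one to control the unit group, so the constant $c_1$ must incorporate the regulator, and one must check that balancing the logarithmic embedding of a fixed generator $\beta$ by a unit indeed brings every $\log^+|\gamma|_v$ down to roughly $\frac{1}{g}\log\numnorm\mathfrak{a}$ plus a bounded error — this is where the bulk of the bookkeeping with normalizations of places, local degrees, and the shape of the fundamental domain lives. The non-principal case, by contrast, is a direct and classical application of Minkowski's theorem and should be essentially routine.
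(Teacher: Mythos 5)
Your proposal is correct and takes essentially the same route as the paper: the paper invokes Lemma~I.6.2 of Neukirch (the Minkowski bound) to produce a nonzero $\alpha\in\mathfrak{a}$ with $\lvert\galnorm_{K/\Q}(\alpha)\rvert\ll_K\numnorm\mathfrak{a}$ --- taken to be a generator when $\mathfrak{a}$ is principal --- and then cites Lemma~3 of Gy\H ory--Yu (Prop.~4.3.12 of Evertse--Gy\H ory) for precisely the unit-balancing step you derive from Dirichlet's unit theorem. The only differences are cosmetic: you prove both ingredients directly rather than citing them, and you correctly flag the degenerate case $\numnorm\mathfrak{a}=1$, which is harmless in the paper's application since the proposition is only used for powers of prime ideals.
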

\begin{proof}
By geometry of numbers, we have such a bound with $h(\gamma)$ replaced by the norm of $\gamma$ (see for instance \cite{lang:algebraic}). Then the result follows from Lemma~3 of \cite{gyory2006form_eq} (or Prop.~4.3.12 of \cite{evertse:unit}).
\end{proof}


\section{The main criterion: lower bounds for the radical} 
In this section we prove Theorem~\ref{thm:main_criteria}.

Let $\alpha_1 t - \beta_1$ and $\alpha_2 t - \beta_2$ be two non-proportial linear factors of $F(t)$ with $\alpha_j,
\beta_j$ algebraic integers.
Let $K := \Q(\alpha_1, \alpha_2, \beta_1, \beta_2)$ and let $g := [K : \Q]$. We define the polynomial
\[
	G(t) := (\alpha_1 t - \beta_1) (\alpha_2 t - \beta_2) \in \mathcal{O}_K[t].
\]
In this section, $\kappa_1,\kappa_2,...$ will denote some auxiliary positive constants depending only on $F$. 

From now on, $n$ denotes an integer in $\Z$ and the constants $\kappa_j$ are independent of $n$. We also define
\[
	\omega := \alpha_2(\alpha_1n - \beta_1) - \alpha_1(\alpha_2n - \beta_2) = \alpha_1\beta_2 - \alpha_2\beta_1,
\]
which is a non-zero element of $O_K$ independent of $n$.

\subsection{From ideals to integers} Let $\pfrak_1,...,\pfrak_r$ be the prime ideals dividing $\alpha_1 \alpha_2 G(n)$. Then, as fractional ideals, we have
$$
\left(\frac{\alpha_1(\alpha_2n - \beta_2)}{\alpha_2(\alpha_1n - \beta_1)}\right)= \prod_{j=1}^r \pfrak^{e_j}
$$
for certain integer exponents $e_j$. Let $\mathbf{h}$ be the class number of $K$ and for each $j$, we choose a generator $\xi_j$ of $\mathfrak{p}_j^{\mathbf{h}}$ satisfying the bound in Proposition~\ref{thm:main_bound}. Then we get
$$
(\xi)= \prod_{j=1}^r (\xi_j)^{e_j}
$$
where
$$
\xi:=\left(\frac{\alpha_1(\alpha_2n - \beta_2)}{\alpha_2(\alpha_1n - \beta_1)}\right)^{\mathbf{h}}.
$$
Therefore, there is a unit $u \in \mathcal{O}_K^\times$ such that $\xi = u \xi_1^{e_1} \cdots
\xi_r^{e_r}$.

Let $\{ \eta_1, \dots, \eta_s \}$ be a basis for the units $\mathcal{O}_K^\times$ modulo torsion, where $s = \rk(\mathcal{O}_K^\times)$. Let $J := \{ 1, \dots, r \}$ and $J' := \{ 1, \dots, s \}$. Then we have
\[
	\xi  = \left( 1 - \frac{\omega}{\alpha_2(\alpha_1n - \beta_1)} \right)^{\mathbf{h}}	= \zeta \cdot \prod_{i \in J^\prime} \eta_i^{e'_i} \cdot \prod_{j\in J} \xi_j^{e_j},
\]
where $\zeta \in K^\times$ is a root of unity and the $e'_i$ are the integer exponents.


\subsection{Choosing factors}
Set
\[
	B := \exp\mathopen{}\left( \sqrt{(\log^* R)\log^*_2 R} \right),
\]
where $R := \rad F(n)$.
Let $I := \{ j \in J : |e_j| > B \}$ and define
\[
	\xi_0 := \prod_{j\in J- I} \xi_j^{e_j}.
\]
Set $I_0 := I \cup \{ 0 \}$ and $m := |I_0|$.

Pick any archimedian absolute value $|-|$ on $K$.  As $\omega$, $\alpha_1$, $\alpha_2$ and $\beta_1$ are all fixed, all the chosen absolute value of  $q := \kappa_1/(\alpha_2(\alpha_1n - \beta_1))$ is less than $\kappa_1/|n|$ as $|n|$ grows, and therefore 
\[
	|1 - \xi| = |1 - (1 - q)^{\mathbf{h}}| <\frac{\kappa_2}{|n|},\quad n\ne 0.
\]

Consider the subgroup $\Gamma \le K^\times$ generated by the roots of unity in $K$, the element $\xi_0$, and the elements $\eta_i$ for $i\in J'$ and $\xi_j$ for $j\in I$. By Theorem~\ref{thm:linforms}, as $|n|$ grows we have 
\begin{equation}
	\begin{aligned}
		\kappa_3\log |n| < \log |n| - \log \kappa_2
		&\le -\log |1-\xi|\\
		&\le \kappa_3^{m+s} \left(\log^*h(\xi)\right) \prod_{j \in J'} h(\eta_j)\cdot\prod_{i \in I_0} h(\xi_i)\\
		&\le \kappa_4^{m} \left(\log^*h(\xi)\right) \prod_{i \in I_0} h(\xi_i)
	\end{aligned}
	\label{eqn:gyorys_bound}
\end{equation}
because the choice of the elements $\eta_1,...,\eta_s$ only depend on $K$, and $m\ge 1$. 

Notice that as $|n|$ grows
$$
	h(\xi) = {\mathbf{h}} \cdot h\left( \frac{\alpha_1(\alpha_2n - \beta_2)} {\alpha_2(\alpha_1n - \beta_1)} \right) \le \kappa _5 \log |n|
$$
and we deduce
$$
\kappa_3\log |n| \le \kappa_6^{m} (\log^*_2|n|)\prod_{i \in I_0} h(\xi_i).
$$
Hence,
$$
\frac{\log |n|}{\log^*_2|n|} \le \kappa_7^{m} \prod_{i \in I_0} h(\xi_i).
$$


\subsection{Proof of the lower bound for the radical}

Continuing with the previous argument, in this section we conclude the proof of Theorem  \ref{thm:main_criteria}.

Let $p_j$ be the prime number below $\pfrak$. By choice of the $\xi_j$ we have
$$
h(\xi_j) \le \kappa_8 \log p_j
$$
 for $j=1,...,r$. Regarding $h(\xi_0)$, we have
$$
h(\xi_0) \le  \sum_{j\in J- I} h(\xi_j^{e_j}) \le B \sum_{j\in J- I} h(\xi_j) \le \kappa_8 B \log R
$$
where we recall that $R=\rad F(n)$. Hence
\begin{equation}\label{EqnKeyBound}
\frac{\log |n|}{\log^*_2|n|} \le \kappa_9^{m} B(\log R) \prod_{i\in I} \log p_i.
\end{equation}

If $I = \emptyset$ so that $m=1$, then
$$
\frac{\log |n|}{\log^*_2|n|} \le \kappa_{10} B(\log R) 
$$
which gives $\log |n| \le B^{\kappa_{11}}$ as desired.  So we may assume $m\ge 2$, that is, $I$ non-empty.

The arithmetic-geometric mean inequality gives
$$
\prod_{i\in I} \log p_i \le \left(\frac{\log R}{ m-1}\right)^{m-1}\le (\log R)^{m-1}
$$
and from \eqref{EqnKeyBound} we get
\begin{equation}\label{EqnMainLast}
\frac{\log |n|}{\log^*_2|n|} \le \kappa_{12}^{m} B(\log R)^m.
\end{equation}

From assumption \eqref{eqn:main_thm_condition} and the definition of the index set $I$ (whose cardinality is $m-1$), we deduce that 
$$
(\kappa_{13}B)^{m-1} \le R^{\kappa_{14}}
$$
from which it follows that
$$
m \le \kappa_{15}\frac{\log^* R}{ \log^* B} = \kappa_{15} \sqrt{\frac{\log^* R}{\log^*_2 R}}.
$$

Using this in \eqref{EqnMainLast} we get
$$
\sqrt{\log|n|}\le \frac{\log |n|}{\log^*_2|n|} \le B \exp\left(\kappa_{16} (\log^*_2 R)  \sqrt{\frac{\log^* R}{\log^*_2 R}}\right) \le B^{\kappa_{17}}
$$
which concludes the  proof of Theorem \ref{thm:main_criteria}.


\section{Bounds for Szpiro's conjecture}
\label{sec:szpiro}

The following result is well-known, but we could not find a convenient reference.
\begin{lemma}\label{LemmaY}
	Let $\pi \colon X \to \PP^1_\C$ be a non-isotrivial elliptic surface over $\C$.
	Then it has at least three bad fibres.
\end{lemma}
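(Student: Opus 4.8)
The plan is to use the Euler characteristic / degree-of-discriminant identity for elliptic surfaces together with the local contribution of each singular fibre. Recall that for a relatively minimal elliptic surface $\pi\colon X\to\PP^1_\C$ with a section, one has
\[
	12\chi(\Ocal_X) = \deg(\mathfrak{d}) = \sum_{v} \mathrm{ord}_v(\mathfrak{d}),
\]
where $\mathfrak{d}$ is the minimal discriminant divisor on $\PP^1$ and the sum runs over the points $v$ where the fibre is singular; each term $\mathrm{ord}_v(\mathfrak{d})$ is a positive integer, bounded below by $1$, and for non-isotriviality we have $\chi(\Ocal_X)\ge 1$, so $\deg(\mathfrak{d})\ge 12$. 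Equivalently, I would invoke the Shioda--Tate/Kodaira theory: the functional invariant $j\colon\PP^1\to\PP^1$ is non-constant since $\pi$ is non-isotrivial, and the places of bad reduction are controlled by the poles of $j$ and the ramification of $j$ over $0$ and $1728$.

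First I would reduce to the case of a relatively minimal elliptic surface with a section (a Jacobian elliptic surface): replacing $X$ by its relatively minimal model does not change the set of singular fibres' base points in a way that could only decrease their number, and passing to the Jacobian fibration leaves the functional invariant $j$ unchanged while only possibly simplifying fibres; non-isotriviality is preserved since it is equivalent to $j$ being non-constant. Second, I would argue by contradiction: suppose there are at most two points $v_1, v_2\in\PP^1$ over which $\pi$ has bad reduction. Then $j\colon\PP^1\to\PP^1$ is a morphism that is finite (since $j$ is non-constant) and whose only poles lie in $\{v_1,v_2\}$. Third, I would combine two facts to force $j$ to be constant, contradicting non-isotriviality: (a) over $\PP^1\setminus\{v_1,v_2\}$, which is either $\G_m$ or $\A^1$, the elliptic surface has everywhere good reduction, so by the theory of elliptic surfaces over such bases (equivalently, semistable reduction and the fact that $\G_m$ and $\A^1$ carry no non-isotrivial families — e.g. via the non-existence of non-constant maps respecting the relevant monodromy, or directly via the Euler characteristic formula which would give $\chi(\Ocal_X) \le \tfrac{2}{12} < 1$, impossible) the family must be isotrivial; (b) alternatively and most cleanly, use $12\chi(\Ocal_X)=\sum_v \mathrm{ord}_v(\mathfrak{d})$ with at most two positive summands forces $\chi(\Ocal_X)=0$, hence $X$ birational to a product, hence isotrivial.

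The cleanest route, which I would write up in detail, is purely via the discriminant-degree formula: take the minimal Weierstrass model over $\PP^1$, let $\Delta$ be the discriminant section of $\mathcal{L}^{\otimes 12}$ where $\deg\mathcal{L}=\chi(\Ocal_X)=:e$, so $\deg(\mathrm{div}\,\Delta) = 12e$; non-isotriviality gives $e\ge 1$ (if $e=0$ the surface is a product or quasi-bundle and $j$ is constant), hence $\deg(\mathrm{div}\,\Delta)\ge 12$; since each singular fibre contributes at least $1$ to this degree, there are at least $12\ge 3$ singular fibres — in particular at least three. The main obstacle is being careful about the hypotheses: the statement as given does not assume a section or relative minimality, so I must handle (i) the passage to a relatively minimal model without creating a situation with fewer bad fibres, and (ii) the passage to the Jacobian fibration, noting that the locus of singular fibres and the value of $\chi(\Ocal_X)$ are controlled appropriately; and I must pin down the precise meaning of "bad fibre" intended here (singular fibre in the sense of Kodaira), since a smooth fibre that is merely multiple is a subtlety one might need to mention. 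Once those normalizations are in place, the Euler-characteristic inequality finishes it immediately.
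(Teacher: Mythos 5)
Your ``cleanest route'' --- the one you say you would write up in detail --- has a fatal non sequitur at its final step. From $\sum_v \mathrm{ord}_v(\mathfrak{d}) = 12\chi(\Ocal_X)\ge 12$ together with $\mathrm{ord}_v(\mathfrak{d})\ge 1$ at each singular fibre, you can only bound the number of singular fibres from \emph{above} by the degree of the discriminant, not from below: a priori a single fibre of type $I_{12\chi}$ could account for the entire degree. To deduce a lower bound on the number of fibres from $\deg(\mathfrak{d})\ge 12$ you would need a uniform \emph{upper} bound on each local contribution $\mathrm{ord}_v(\mathfrak{d})$, and no such bound exists (a fibre of type $I_n$ contributes $n$, which is unbounded). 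So the discriminant-degree formula alone cannot prove the lemma; ruling out configurations such as ``one huge $I_n$ fibre'' genuinely requires a monodromy or hyperbolicity input. Your parenthetical alternatives in the second paragraph gesture at exactly that input (``$\G_m$ and $\A^1$ carry no non-isotrivial families''), but you do not justify it, and the inequality $\chi(\Ocal_X)\le \tfrac{2}{12}$ you invoke there is unexplained --- it amounts to a Szpiro-type inequality for function fields, which is itself the nontrivial content of the lemma, not a consequence of the degree formula.

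For comparison, the paper's proof is precisely the hyperbolicity argument you skipped: if there were at most two bad fibres, the base $S=\PP^1_\C\setminus T$ would have universal cover $\C$; choosing a congruence subgroup $\Gamma\le\PSL_2(\Z)$ small enough that $Y_\Gamma$ is hyperbolic, one gets an \'etale cover $S'\to S$ with a non-constant classifying map $S'\to Y_\Gamma$ (non-constant by non-isotriviality), and composing with the covering $\C\to S'$ yields a non-constant holomorphic map from $\C$ to a hyperbolic Riemann surface, contradicting Liouville. Your preliminary reductions (passing to a relatively minimal model and to the Jacobian fibration, and pinning down the meaning of ``bad fibre'') are sensible housekeeping, but they do not repair the main step; you would need to replace the degree count with the monodromy/uniformization argument to obtain a correct proof.
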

\begin{proof}
	Let $T \subseteq \PP^1_\C$ be the points supporting the bad fibres and let $S := \PP^1_\C \setminus T$.
	As the $j$-invariant is non-constant, $\# T \ge 1$.
	Suppose, by contradiction, that $\# T \le 2$, then the (topological) universal cover of $S$ is $\C$.

	Let $\Gamma \le \PSL_2(\Z)$ be a congruence subgroup and let $Y_\Gamma$ be the corresponding open modular
	curve over $\C$.
	Taking $\Gamma$ sufficiently small we may assume that $Y_\Gamma$ has no elliptic points and, thus, it is
	hyperbolic.
	There is an étale cover $S' \to S$ with a classifying map $S' \to Y_\Gamma$ which is non-constant as the
	elliptic surface is non-isotrivial.
	But since $\C$ is the universal cover of $S$, we get a non-constant map $\C \to S'$, which contradicts the
	hyperbolicity of $Y_\Gamma$.
\end{proof}

\begin{lemma}\label{LemmaRH}
	Let $f, g \in \C[t]$ be coprime, not both constant.
	Then $f^2 + g^3$ is non-constant.
\end{lemma}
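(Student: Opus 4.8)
\emph{Proof proposal.} The plan is to argue by contradiction: assume that $c := f^2 + g^3$ is a constant in $\C$ while $f,g$ are coprime and not both constant, and derive an absurdity. The first step is a harmless reduction to the case in which \emph{both} $f$ and $g$ are non-constant. Indeed, if $f$ were constant then $g^3 = c - f^2$ would be constant, hence $g$ would be constant, contradicting the hypothesis; by symmetry, $g$ constant forces $f$ constant. So I may assume $\deg f \ge 1$ and $\deg g \ge 1$. Next I would do the degree bookkeeping: since $\deg(f^2) = 2\deg f$, $\deg(g^3) = 3\deg g$, and the sum $f^2+g^3$ has degree $\le 0$, the leading terms of $f^2$ and $g^3$ must cancel, which forces $2\deg f = 3\deg g$; write $\deg f = 3k$ and $\deg g = 2k$ with $k \ge 1$.

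The core of the argument is to differentiate the relation $f^2 + g^3 = c$, obtaining $2ff' + 3g^2g' = 0$, i.e.\ $2ff' = -3g^2g'$. Since $\C[t]$ is a UFD and $\gcd(f,g)=1$, we have $\gcd(g^2,f)=1$, so from $g^2 \mid 2ff'$ and the fact that $2$ is a unit I would conclude $g^2 \mid f'$. Because $\C$ has characteristic zero and $f$ is non-constant, $f' \ne 0$, so this divisibility yields $\deg(g^2) \le \deg(f')$, that is $4k \le 3k - 1$, which is impossible for $k \ge 1$. This contradiction establishes the lemma.

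An alternative route, once both $f$ and $g$ are known to be non-constant, is to invoke the Mason--Stothers theorem (the polynomial $abc$ theorem) applied to the pairwise coprime, not-all-constant triple $f^2,\ g^3,\ -c$: one gets $\max(2\deg f,\ 3\deg g) \le \deg\rad(f^2g^3c) - 1 \le \deg f + \deg g - 1$, and the two resulting inequalities $\deg f \le \deg g - 1$ and $2\deg g \le \deg f - 1$ are jointly impossible. I would include the elementary differentiation argument rather than quoting Mason--Stothers, to keep the section self-contained. There is no serious obstacle here; the only points that need a little care are the opening reduction that guarantees both polynomials are non-constant (so that the degree matching $2\deg f = 3\deg g$ is legitimate) and the use of characteristic zero to ensure $f' \ne 0$, without which the final degree comparison would be vacuous.
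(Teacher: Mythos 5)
Your proof is correct, but it follows a genuinely different route from the paper's. The paper argues geometrically: if $f^2+g^3$ were a constant $c$, then (after normalizing, which tacitly uses $c\neq 0$; the case $c=0$ is excluded at once by coprimality) the pair $(x,y)=(-g,f)$ would define a non-constant morphism $\PP^1_\C\to E$ onto the genus-one curve $y^2=x^3+1$, which Riemann--Hurwitz forbids. You instead give the elementary differentiation argument: after reducing to the case where both $f$ and $g$ are non-constant and matching degrees ($\deg f=3k$, $\deg g=2k$), you differentiate $f^2+g^3=c$ to get $2ff'=-3g^2g'$, use $\gcd(f,g)=1$ in the UFD $\C[t]$ to conclude $g^2\mid f'$, and derive the impossible inequality $4k\le 3k-1$ (correctly noting that characteristic zero guarantees $f'\neq 0$). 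This is essentially the Mason--Stothers mechanism made explicit, and it handles the case $c=0$ with no extra care, whereas your sketched Mason--Stothers alternative would need a word about that degenerate case since the triple $(f^2,g^3,-c)$ then has a zero entry. Your version is more self-contained and purely algebraic; the paper's is shorter and consistent with the geometric tools (hyperbolicity, Riemann--Hurwitz) already in use in that section. Either proof is acceptable.
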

\begin{proof}
	For otherwise, we would get a non-constant morphism $\PP^1_\C \to E$, where $E$ is the complex elliptic curve
	given by $y^2 = x^3 + 1$.
	This is not possible, by the Riemann-Hurwitz formula.
\end{proof}

Finally, we can prove our subexponential bounds for the height conjecture. 

\begin{proof}[Proof of Theorem \ref{ThmMainSzpiro}]
	By Lemma \ref{LemmaRH} the discriminant $D(t)$ is non-constant. 

	Let $\rho\in \Z$ be the resultant of $A(t)$ and $B(t)$. Since $A,B$ are coprime over $\Q$, we have $\rho\ne 0$. For every $n\in \Z$ we get that $\gcd(A(n),B(n))$ divides $\rho$, so the equation defining $E_n$ is \emph{quasiminimal}: there is a constant $\tau \ge 1$ independent of $n$ such that $D(n)$ divides $\tau |\Delta_n|$. Furthermore, $E_n$ is semistable outside the primes dividing $\rho$; in particular $R(n)$ divides $\rho N_n$ where $R(n)=\rad (D(n))$.

	By coprimality of $A(t)$ and $B(t)$, Tate's
	algorithm gives at least one fibre of multiplicative reduction (a zero of $D(t)$). Thus, the elliptic surface
	defined by \eqref{EqnEllSurf} is non-isotrivial and it has at least two bad fibres in the affine chart $t\in
	\A^1\subseteq \Pro^1$, by Lemma \ref{LemmaY}. This means that $D(t)\in \Z[t]$ has at least two different complex
	zeros and Theorem \ref{thm:main_criteria} gives
	\[
		\log |n| \le \exp\mathopen{}\left(\kappa \sqrt{(\log R(n)) \log^*_2R(n)} \right)\mathclose{},
	\]
which applies thanks to Theorems \ref{thm:semistable_bound} and \ref{thm:additive_bound} (the second one for the primes of additive reduction of $E_n$); here we used the fact that $D(n)$ divides $\tau |\Delta_n|$.
	
	 As $R(n)$ divides $\rho N_n$ and
	\[
		h(E_n) \asymp \log \max\{|A(n)|^3,|B(n)|^2\} \asymp \log |n|
	\]
	(see \cite{silverman86heights}),  the result follows.
\end{proof}


\section{The greatest prime factor of polynomial values} 

In this section we prove Theorem \ref{ThmMainP}.

\subsection{Quadratic case}
Let $f(x) = ax^2 + bx + c \in \Z[x]$ be a quadratic polynomial. Let $\delta := b^2-4ac$ be its discriminant. Consider the  curve $E$ defined by the Weierstrass equation
\begin{equation}
	E\colon \quad y^2 = x^3 - 3 \delta x - 2 \delta (2an + b).
	\label{eqn:elliptic_quadr}
\end{equation}

Theorem \ref{ThmMainP} in the quadratic case follows from Theorem \ref{ThmMainSzpiro} with $A(n)=-3\delta$ and $B(n)= 2\delta(2an+b)$. Indeed, with these choices one has
$$
D(n)=-6912\delta^2af(n)
$$
and noticing that the Weierstrass equation is quasi-minimal and semistable at each $p\nmid 6\delta$ (see \cite{tate75algorithm}) we deduce from Theorem \ref{ThmMainSzpiro} that
$$
\log n \ll \log |D(n)| \ll \exp\left(\kappa\sqrt{(\log^*R(n))\log^*_2R(n)}\right)
$$
where $R(n)=\rad(f(n))$. This proves the desired lower bound for $R(n)$. 

By classical estimates on prime numbers, for every integer $M>1$ one has
$$
4^{P(M)} \ge \prod_{p\le P(M)} p \ge \rad(M)
$$
and the lower bound for $P(f(n))$ follows.


\subsection{Cubic case}
The cubic case of Theorem \ref{ThmMainP} is proved exactly as the quadratic case, using the family of elliptic curves
$$
E_n: \quad y^2 = x^3 + 3c(an+b)x +2c^2.
$$
Here one chooses $A(n)=3c(an+b)$ and $B(n)=2c^2$, which gives the discriminant
$$
D(n)= -1728 c^3\cdot f(n)
$$
where $f(x)=(ax+b)^3+c$. Then one concludes by applying Theorem \ref{ThmMainSzpiro}.

\section*{Acknowledgments}
H.P. was supported by ANID Fondecyt Regular grant 1230507 from Chile. 
We are grateful to Cameron L.~Stewart for suggesting the generalization to quadratic polynomials, and to K\'alm\'an Gy\"ory for comments on a first version of Section~\ref{sec:szpiro}.



\end{document}